\documentclass [10pt,reqno]{amsart}

%\newlength{\mytmargin} \setlength{\mytmargin}{1in} \addtolength{\mytmargin}{14pt}
%\addtolength{\mytmargin}{20pt} % headsep=14pt, footskip=12pt, headheight=7pt
\newlength{\myhmargin} \setlength{\myhmargin}{1in} \addtolength{\myhmargin}{18pt}
\usepackage{amsmath,amssymb,amsthm,amsfonts,amscd,flafter,
graphicx,verbatim,pinlabel,mathrsfs,enumitem}
\usepackage[all]{xy}
\usepackage{epstopdf}
\epstopdfsetup{suffix=}
\usepackage[colorlinks=false]{hyperref}
\usepackage[all]{hypcap}
\usepackage{hyperref}
\usepackage{array}

\usepackage{pstricks}
\usepackage[latin1]{inputenc}

 \setlength{\parskip}{2pt}

%\usepackage{pgf}
%\usepackage{pgfmath}

%\usepackage{tikz}
%\usepackage{tikz-cd}
%\usetikzlibrary{matrix,arrows,decorations.pathmorphing}

\newtheorem{theorem}{Theorem}[section]
\newtheorem{lemma}[theorem]{Lemma}

\newtheorem{prop}[theorem]{Proposition}
\newtheorem{proposition}[theorem]{Proposition}

\newtheorem{conjecture}[theorem]{Conjecture}

\theoremstyle{definition}
\newtheorem{definition}[theorem]{Definition}
\newtheorem{remark}[theorem]{Remark}

\renewcommand{\epsilon}{\varepsilon}

%%%% Macros for functors

\hyphenation{com-pact-i-fi-cation}
\hyphenation{dim-en-sional}
\hyphenation{Uhlen-beck}
\hyphenation{mon-o-pole}
\hyphenation{man-i-fold}
\hyphenation{mo-no-pole}
\hyphenation{re-du-ci-ble}
\hyphenation{re-du-ci-bles}
\hyphenation{geo-me-tric}
\hyphenation{complex-geo-me-tric}
\hyphenation{de-cou-page}
\hyphenation{Kron-hei-mer}
\DeclareMathAlphabet{\mathpzc}{OT1}{pzc}{m}{it}
\usepackage{mathrsfs}

\newcommand{\Z}{\mathbb{Z}}

\newcommand{\RP}{\mathbb{R}\mathbf{P}}

\renewcommand{\sl}{\mathfrak{sl}}

\newcommand{\galg}{\mathfrak{g}}

\renewcommand{\bar}{\overline}

\newcommand{\cab}{\text{cab}}
\newcommand{\cA}{\mathcal{A}}

\newcommand{\cC}{\mathcal{C}}

\newcommand{\X}{\mathcal{X}}
\newcommand{\Xcol}{\mathcal{X}_{\rm col}}

\newcommand{\Khcol}{\text{\em Kh}_{\rm col}}

\DeclareMathOperator{\cone}{Cone}

\DeclareMathOperator{\Sq}{Sq}

\begin{document}
\thispagestyle{empty}
\parindent0em
\title[A Khovanov stable homotopy type for colored links]{A Khovanov stable homotopy type for colored links}
\author[Andrew Lobb]{Andrew Lobb}
\address{Department of Mathematical Sciences\\ Durham University \\ United Kingdom}
\email{andrew.lobb@durham.ac.uk}

\author[Patrick Orson]{Patrick Orson}
\address{Department of Mathematical Sciences\\ Durham University \\ United Kingdom}
\email{patrick.orson@durham.ac.uk}

\author[Dirk Sch\"utz]{Dirk Sch\"utz}
\address{Department of Mathematical Sciences\\ Durham University \\ United Kingdom}
\email{dirk.schuetz@durham.ac.uk}

\thanks{The authors were partially supported by the EPSRC Grant EP/K00591X/1.}

\begin {abstract}
We extend Lipshitz-Sarkar's definition of a stable homotopy type associated to a link $L$ whose cohomology recovers the Khovanov cohomology of $L$.  Given an assignment $c$ (called a \emph{coloring}) of positive integer to each component of a link $L$, we define a stable homotopy type $\Xcol(L_c)$ whose cohomology recovers the $c$-colored Khovanov cohomology of $L$.  This goes via Rozansky's definition of a categorified Jones-Wenzl projector $P_n$ as an infinite torus braid on $n$ strands.

We then observe that Cooper-Krushkal's explicit definition of $P_2$
%and $P_3$
also gives rise to stable homotopy types of colored links (using the restricted palette $\{ 1 , 2 \}$), and we show that these coincide with $\Xcol$.  We use this equivalence to compute %the reduced stable homotopy type of the 3-colored unknot, and
the stable homotopy type of the $(2,1)$-colored Hopf link and the $2$-colored trefoil.  Finally, we discuss the Cooper-Krushkal projector $P_3$ and make a conjecture of $\Xcol(U_3)$ for $U$ the unknot.
%Interestingly, $\X^3(U)$ has a different periodicity to that of its cohomology.
\end {abstract}

\maketitle

\section{Introduction}
\subsection{Categorification}
Given a semisimple Lie algebra $\galg$ and a link $L \subset S^3$ in which each component of $L$ is decorated by an irreducible representation of $\galg$, the Reshetikhin-Turaev construction returns an invariant of that link that can, in principle, be computed combinatorially from any diagram of $L$.  The standard example is the Jones polynomial, which arises from decorating all components with the fundamental representation $V=V^1$ of $\sl_2$ (here the superscript $1$ on the representation refers to the highest weight of $V$ being $1$).  There are then two obvious first directions in which one can generalize.

On the one hand, one might vary the Lie algebra and consider instead $\sl_n$, but still with the fundamental representation of $\sl_n$.  Each invariant obtained this way is a 1-variable specialization of the 2-variable HOMFLYPT polynomial, and satisfies an oriented skein relation, which yields the benefit of easy computability.

On the other hand, one might stick with $\sl_2$, but vary the irreducible representation.  There is one irreducible $(n+1)$-dimensional representation $V^n$ (of highest weight $n$) for each $n \geq 1$.  Decorating with $V^n$ gives rise to the so-called $n$-colored Jones polynomial.  The colored Jones polynomials no longer satisfy such pleasant skein relations, but they are powerful - for example giving rise to $3$-manifold invariants (also called Reshetikhin-Turaev invariants or, in another form, Turaev-Viro invariants).

Both the $\sl_n$ polynomials and the colored Jones polynomials admit categorifications - that is, they can be exhibited as the graded Euler characteristic of bigraded cohomology theories.  In the case of $\sl_n$, this is Khovanov-Rozansky cohomology \cite{khr1}.  In the case of the colored Jones polynomial there are constructions due to many authors, some inequivalent, although the two we shall be considering in fact give isomorphic cohomologies.  The first is due to Rozansky \cite{rozansky}, and the second due to Cooper and Krushkal \cite{cooperkrushkal}.  
In both cases, the fundamental representation of $\sl_2$ gives Khovanov cohomology \cite{kh1}.

\subsection{Spacification}
Recently it has been shown that Khovanov cohomology admits a \emph{spacification}, that is, for any link there is a stable homotopy type $\X(L)$ whose cohomology gives Khovanov cohomology (the \emph{bi}grading of Khovanov cohomology is recovered from a splitting of $\X(L)$ into wedge of spaces indexed by the integers).  This is work due to Lipshitz and Sarkar \cite{LipSarKhov}.  We note that the term `spacification' is not yet well-defined, since it is unclear exactly what properties one should require of it (for example: should just taking a wedge of the Moore spaces determined by the cohomology count as a spacification?)  Nevertheless, we find it a convenient shorthand for now.

It is a natural question if other Reshetikhin-Turaev invariants admitting categorifications can further be spacified.  In the $\sl_n$ case, work by two of the authors with Dan Jones \cite{JLS} has constructed an $\sl_n$ stable homotopy type given the input of a \emph{matched} knot diagram.  There is good evidence that this stable homotopy type should be diagram-independent.  For $n=2$ it agrees with the stable homotopy type due to Lipshitz-Sarkar.

The case of the colored Jones invariants is, in a sense, a little easier.  In particular, Rozansky's categorification admits spacification.  In the case of the $c$-colored unknot whose categorification is, in Rozansky's construction, the stable limit of the Khovanov cohomology of $c$-stranded torus links as the number of twists goes to infinity, this has been observed by Willis \cite{Willis}, whose paper appeared on the arXiv while this one was being written.  The case of a $c$-colored link in general is no harder, and in fact Rozansky has already taken care of the difficult work.

Since the Cooper-Krushkal and the Rozansky categorifications are equivalent, the natural expectation is that one can lift the Cooper-Krushkal categorification to a spacification equivalent to the Rozansky spacification.  This turns out to be straightforward in the $2$-colored case, but at least the more obvious attempt fails in the $3$-colored case, as we discuss later.

\subsection{Computational results}
\label{subsec:comp}
We shall define a stable homotopy type $\Xcol(L_c)$ where $L_c$ is a framed link with a coloring $c$ of its components by positive integers.  Picking the coloring $1$ for each component returns the stable homotopy type $\Xcol(L_1)$, a grading-shifted version of Lipshitz-Sarkar's stable homotopy type $\X(L)$.

We make some computations for certain links and colorings in Section \ref{sec:examples}.  Already in the simplest case these show interesting behaviour: the link with the lowest positive crossing number is the Hopf link and the first coloring which has not yet been considered by Lipshitz-Sarkar is where one component is colored with $2$ and the other with $1$.  The tail of the colored Khovanov cohomology of the $(2,1)$-colored Hopf link agrees with the tail of the colored Khovanov cohomology of the $(2,1)$-colored $2$-component unlink.  Nevertheless, we observe that even these tails can be distinguished by the stable homotopy type.

Although we are not yet able to compute fully the stable homotopy type of the $3$-colored unknot we make a conjecture based on some partial computations.  This conjecture is interesting because its truth would imply that the periodicity of the tail of the stable homotopy type of a colored link (even in the case of the $3$-colored unknot) can be longer than the periodicity of the tail of its cohomology.

\subsection{Plan of the paper}
In Section \ref{sec:2app} we first observe that we can combine Rozansky's insight with the work of Lipshitz-Sarkar \cite{LipSarKhov}.  This combination is straightforward and yields a stable homotopy type of a framed colored link whose cohomology recovers colored Khovanov cohomology.  Secondly, we give ourselves a framework in which to make computations.  For this it makes more sense to use the Cooper-Krushkal categorification which, at least in the case of colors $2$ and $3$, is entirely explicit.  We define what we mean by a lift of the Cooper-Krushkal categorification to a spacification and show that any such lift gives the same stable homotopy type as that arising from Rozansky's construction.

In Section \ref{sec:CK_spacify_construction}, we construct such a lift of the Cooper-Krushkal categorification for colorings taken from the restricted palette $\{ 1 , 2 \}$.  The case of 3-colored cannot be made to work in the way that one might expect (there is an explicit obstruction to this).  Finally, in Section \ref{sec:examples} we make computations as already discussed in Subsection \ref{subsec:comp}.  At the end of this section we give a discussion of the Cooper-Krushkal 3-colored case.

%We discuss what can and cannot be done more fully in Section \ref{sec:CK_spacify_construction}.

\section{Two approaches to a colored stable homotopy type}
\label{sec:2app}
The colored Jones polynomial is an invariant of framed links $L$ in which each component of $L$ has been assigned a \emph{color}, or in other words a positive integer weight.  We write the color of a component $k$ of $L$ as $c(k)$, and often keep track of the coloring as a subscript $L_c$.

To compute the polynomial one takes a diagram of $L_c$ in which the self-writhe of each component is equal to its framing.  Then one replaces each component $k$ by $c(k)$ parallel copies following the blackboard framing.  Finally, one places on each component a \emph{Jones-Wenzl projector}.  This projector is an element of the relevant Temperley-Lieb algebra, with coefficients in rational functions of $q$.  Finally, one applies the Kauffman bracket, and obtains an element of $\Z[[q,q^{-1}]$ by expanding in powers of $q$.

The Jones-Wenzl projector is idempotent and satisfies turnback-triviality.  It turns out that these two universal properties are enough to determine it completely.  The Jones-Wenzl projector should in principle lift, in a categorification of the colored Jones polynomial, to a complex in Bar-Natan's tangles-and-cobordisms category \cite{bncob}, satisfying idempotence and turnback-triviality up to chain homotopy equivalence.  Cooper-Krushkal \cite{cooperkrushkal} and Rozansky \cite{rozansky} give ways of achieving such a lift.  
Cooper-Krushkal proceed explicitly and give a categorified projector that they define inductively, while Rozansky realizes the categorified projector as a limit of the complexes associated to torus braids.  It is surprising that the latter approach had apparently not been considered even at the decategorified level until Rozansky's insight!  As observed by Cooper-Krushkal, categorified universal properties imply that the two competing categorifications give identical cohomological 
groups.

% However, we can define a renormalized version of $P_3$ and adjust the Rozansky and Cooper-Krushkal categorifications correspondingly.  This renormalization is more amenable to lifting to a stable homotopy type, and we can show that the two stable homotopy types thus defined agree with one another.  Again, this agreement essentially follows from universal properties.

%The first non-trivial calculation to do for the colored stable homotopy type is that of the unknot.  For the 1- or 2-colored unknot the cohomology is thin, which forces the stable homotopy type to be a wedge of Eilenberg-Maclane spaces.
%The 3-colored cohomology is not thin, and it exhibits an order 3 periodicity in the quantum degree.
%The natural conjecture is that the stable homotopy type also exhibits this periodicity.  We show that this is not in fact the case.

\subsection{Grading and other conventions}
We note that there is a discrepancy in the grading conventions between the original paper of Khovanov's \cite{kh1}, Rozansky's torus braids paper \cite{rozansky}, and Cooper-Krushkal's paper \cite{cooperkrushkal}.  We apologize for possibly adding to the confusion.  We shall essentially work with the bigrading conventions used by Bar-Natan \cite{bncob} up to an overall shift.  The overall shift makes it easier to treat the colored Khovanov cohomology as an invariant of a colored framed link, with no choice of orientation.  The convention is depicted in Figure \ref{fig:grading_conventions}.

\begin{figure}
	\resizebox{6cm}{1.4cm}{\input{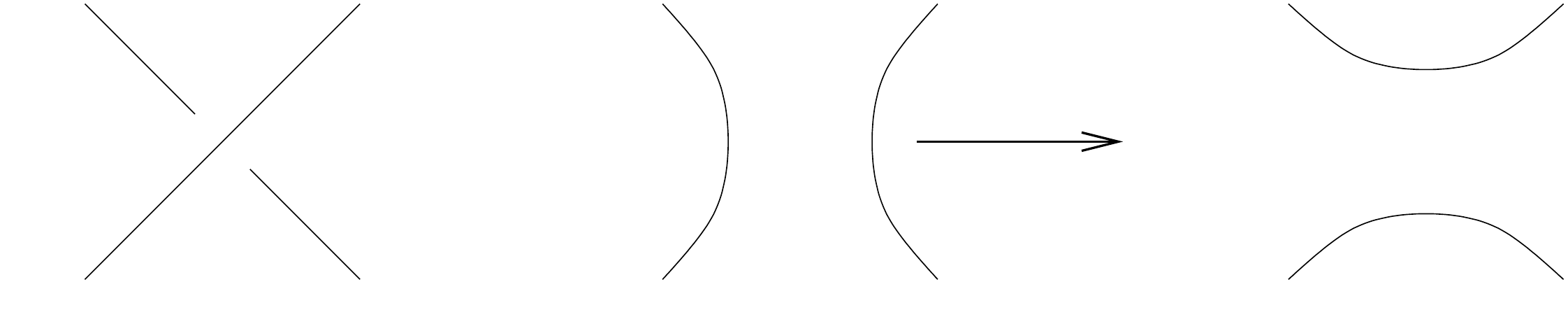_t}}
	\caption{We follow the grading conventions as depicted in the complex that we associate to a single crossing.  The complex is supported in cohomological degrees $\pm 1/2$, and a quantum grading shift is applied.  The differential increases the cohomological degree by $1$ and preserves the quantum grading.}
	\label{fig:grading_conventions}
\end{figure}

With these conventions, the Khovanov complex is invariant up to bigraded homotopy equivalence under the second and third Reidemeister moves, but it is only invariant up to an overall shift under the first Reidemeister move.  Hence it becomes a chain homotopy invariant of framed links (where the framing is given by the blackboard-framing of a diagram).

%Furthermore, we slightly broaden the definition that Lipshitz-Sarkar have given for a framed flow category $\cC$.  In their definition it was assumed that the set of objects $\Ob(\cC)$ is finite.  This is not necessary, and in this paper we consider some framed flow categories that have an infinite number of objects, although only ever a finite number in each cohomological grading.  
%Although there is no obstruction to doing this, one can no longer apply the Cohen-Jones-Segal construction to such a framed flow category to obtain a CW-complex, since the underlying Euclidean space is infinite-dimensional.  However, such infinite-object flow categories will only ever appear as flow categories \emph{covered} by finite-object flow categories.  Truncating the Euclidean space for these covering flow categories enables the application of the Cohen-Jones-Segal construction.

\subsection{Rozansky spacification}
Rozansky \cite{rozansky} has given an approach to colored Khovanov cohomology that expresses the $c$-colored cohomology of a link $L$ as the limit of the Khovanov cohomologies of an $c$-strand cable of $L$ in which one puts an increasing number of twists.  The stabilization of the cohomology was observed earlier by Sto\v{s}\'ic in the case of $L$ being the unknot, which amounts to the stabilization of the cohomology of the $(p,c)$-torus link as $p \rightarrow \infty$.

\begin{figure}
	\resizebox{10cm}{6cm}{\input{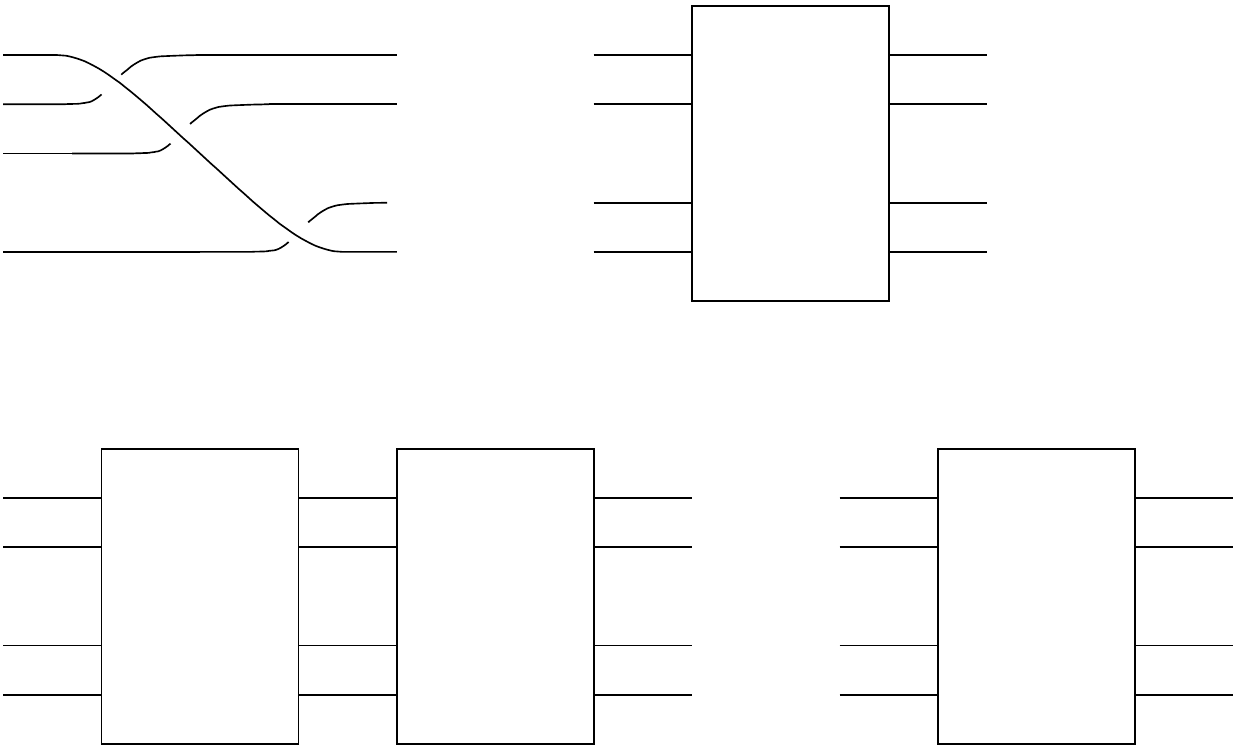_t}}
	\caption{This shows inductively what is meant by twisting $r$ times positively on an $n$-stranded braid.}
	\label{fig:torus_braid}
\end{figure}

We now summarize the construction.  In Figure \ref{fig:torus_braid}, we describe what is meant by twisting $r$ times on an $n$-stranded braid.  We write this braid as $B_{r,n}$.  To each such braid, Bar-Natan's construction \cite{bncob} associates a complex which we shall denote $\langle B_{r,n} \rangle$.  In this complex, each `cochain group' is a vector of tangle smoothings, each such smoothing coming with a quantum degree.  
We shall apply a bigrading shift to this complex so that the resolution which is the identity braid group element is in cohomological degree $0$ and comes with quantum degree shift $0$.  We write the shifted complex as $h^{r(n-1)/2}q^{r(n-1)/2}\langle B_{r,n}\rangle$ where the the exponents of $h$ and $q$ denote cohomological and quantum degree shifts respectively. Note that all other resolutions lie in positive cohomological degrees.

For each $r \geq 1$ there is a map of complexes
\[F_r : (hq)^{rn(n-1)/2}\langle B_{rn,n} \rangle \longrightarrow (hq)^{(r-1)n(n-1)/2} \langle B_{(r-1)n,n} \rangle {\rm ,} \]
given by taking $F_1$ to be the identity in cohomological degree $0$, and then defining $F_r$ to be the tensor product of $F_1$ with the identity on $(hq)^{(r-1)n(n-1)/2}\langle B_{(r-1)n,n} \rangle$.

Rozansky shows that for large $r$ the cone complex $\cone(F_r)$ is homotopy equivalent to a complex in which each smoothing that appears has high cohomological and quantum degrees.  For our purposes, we are mainly interested in the quantum degree; we have

\begin{prop}[Theorem 4.4 \cite{rozansky}]
	\label{prop:roz_cone_limit}
	The cone $\cone(F_r)$ is homotopy equivalent to a complex made up of circleless smoothings, where each such smoothing is shifted in quantum degree by at least $2n(r-1) + 1$.
\end{prop}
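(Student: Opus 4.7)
My strategy is to exploit the inductive definition $F_r = F_1 \otimes \id_{(hq)^{(r-1)n(n-1)/2}\langle B_{(r-1)n,n}\rangle}$, which yields the canonical factorization
\[
\cone(F_r) \;\simeq\; \cone(F_1) \otimes (hq)^{(r-1)n(n-1)/2}\langle B_{(r-1)n,n}\rangle,
\]
and then to perform a careful sequence of Gaussian eliminations and deloopings on the right-hand side in order to bound the quantum degrees from below.

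First I would analyze $\cone(F_1)$ via the cancellation lemma. By construction, $F_1$ is the identity morphism between the shifted identity resolution of $(hq)^{n(n-1)/2}\langle B_{n,n}\rangle$---which sits in bidegree $(0,0)$---and the unique generator of $\langle B_{0,n}\rangle$, also in bidegree $(0,0)$. Bar-Natan's cancellation lemma removes this isomorphism pair, leaving a model of $\cone(F_1)$ supported on the non-identity smoothings of $(hq)^{n(n-1)/2}\langle B_{n,n}\rangle$. Each such smoothing has quantum shift $k \geq 1$, where $k$ is the number of crossings in $1$-resolution; after delooping any circles that form in the composite tangle, one checks that the surviving circleless smoothings have quantum shift at least $1 = 2n(1-1)+1$, settling the base case.

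For $r \geq 2$, the tensor factorization above writes each generator of $\cone(F_r)$ as a vertical composition $x \cdot y$, where $x$ is a non-identity Temperley--Lieb element on $n$ strands coming from the simplified $\cone(F_1)$ (with quantum shift $\geq 1$) and $y$ is a resolution of $B_{(r-1)n,n}$. The crucial geometric input is that every such $x$ necessarily contains a turnback arc; when stacked against the crossings of $B_{(r-1)n,n}$ immediately below, this turnback forces closed circles in the composite smoothing as soon as a neighbouring lower crossing is resolved non-trivially. I would then iteratively deloop these circles and cancel the resulting isomorphism pairs via the cancellation lemma. Each full twist of $B_{(r-1)n,n}$ trapped beneath such a turnback contributes an additional $2n$ to the quantum shift of every surviving circleless smoothing (from the combination of the $(hq)$ grading shift on $B_{(r-1)n,n}$ and the $q^{+1}$ delooping of the trapped circles); accumulating across the $r-1$ copies of $B_{n,n}$ below yields the bound $1 + 2n(r-1)$.

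The main obstacle is the sharp quantum-degree bookkeeping in this inductive step. Each delooping operation introduces both $q^{+1}$ and $q^{-1}$ summands, and one must verify that the $q^{-1}$ contributions are systematically annihilated by subsequent Gaussian eliminations, so that only the $q^{+1}$ contributions survive in the final simplified complex and the surviving circleless smoothings realize the claimed lower bound. Controlling exactly which smoothings persist through the entire iterative delooping-and-cancellation procedure is the combinatorial core of Rozansky's original argument in \cite{rozansky}.
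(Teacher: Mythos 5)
The paper offers no proof of Proposition \ref{prop:roz_cone_limit} at all: it is imported, in the paper's grading conventions, from Rozansky's Theorem 4.4, so the only argument your attempt can be measured against is Rozansky's. The parts of your sketch that are sound are the reduction $\cone(F_r)\simeq \cone(F_1)\otimes (hq)^{(r-1)n(n-1)/2}\langle B_{(r-1)n,n}\rangle$ and the base case: cancelling the identity pair leaves the positive-cohomological-degree part of the full-twist complex, and since a smoothing in cohomological degree $d$ differs from the identity braid by $d$ surgeries and so contains at most $d-1$ circles, delooping leaves circleless smoothings of quantum shift at least $1$ (the same observation the paper itself uses inside the proof of Lemma \ref{lem:too_much_detail_probably}).

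The genuine gap is the step for $r\geq 2$, which is exactly where the content of the theorem lies and which you assert rather than prove. The claim that ``each full twist of $B_{(r-1)n,n}$ trapped beneath a turnback contributes an additional $2n$ to the quantum shift of every surviving circleless smoothing'' is the statement to be established, not a consequence of delooping plus the grading shift: delooping produces $q^{-1}$ as well as $q^{+1}$ summands, and the assertion that the low-degree summands are ``systematically annihilated by subsequent Gaussian eliminations'' is precisely the nontrivial point --- your closing paragraph concedes as much, which amounts to admitting the combinatorial core is missing. Moreover, the analysis cannot be run one generator $x\cdot y$ at a time, because the differential of the tensor product mixes the different top smoothings $x$; the simplification has to be organized globally, e.g.\ by filtering by the through-degree of the Temperley--Lieb element on top and proving a quantitative lemma that a turnback composed with a large power of the full twist is homotopy equivalent to a complex supported in quantum degrees growing linearly in the number of twists. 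That turnback-through-full-twist lemma, with the explicit bound leading to $2n(r-1)+1$, is the main technical work in Rozansky's proof; as written, your text is a correct heuristic for why the bound is plausible, but not a proof of it.
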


The precise form of the quantum degree shift is unimportant for us, rather we note that it increases at least linearly with $r$. %We note also that in the case $r=1$ the cone is the zero complex.

\begin{definition}
Let $D_c$ be an unoriented link diagram in which each component is colored by a positive integer weight (we write the coloring by weights as $c$), and each component $k$ carries a basepoint.  Let the diagram $D^r_c$ be given by the blackboard-framed $c$-stranded cable of $D_c$ in which each component $k$ receives $c(k)r$ positive twists at the basepoint.
\end{definition}

\begin{definition}
	Let
	\[ G_r : (hq)^{\sum_k rc(k)(c(k)-1)/2} \langle D^{r}_c\rangle  \rightarrow (hq)^{\sum_k (r-1)c(k)(c(k)-1)/2} \langle D^{r-1}_c \rangle  \]
	be induced by the tensor product of the maps $F_r$ at each basepoint.
\end{definition}

\begin{lemma}
	\label{lem:too_much_detail_probably}
It follows from Proposition \ref{prop:roz_cone_limit} that, for fixed $j$ and for all large enough $r$, the map of cohomologies
\[ H^{i,j}((hq)^{\sum_k rc(k)(c(k)-1)/2} \langle D^{r}_c\rangle ) \rightarrow H^{i,j}((hq)^{\sum_k (r-1)c(k)(c(k)-1)/2} \langle D^{r-1}_c \rangle )\]
induced by $G_r$ is an isomorphism.
\end{lemma}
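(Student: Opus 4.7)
The plan is to factor $G_r$ as a composition of maps, one per basepoint, and then apply Proposition~\ref{prop:roz_cone_limit} to each factor via the long exact sequence of a mapping cone.

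First I would enumerate the components of $D_c$ as $k_1,\ldots,k_m$ and introduce intermediate complexes $C_r^{(\ell)}$ in which the number of positive twists at basepoint $k_j$ is $(r-1)c(k_j)$ for $j\leq\ell$ and $rc(k_j)$ for $j>\ell$, with the appropriate $(hq)$-shifts absorbed. Since the maps $F_r$ at distinct basepoints act on disjoint regions of the diagram and therefore commute, I obtain a factorisation
\[
G_r = H^{(m)}\circ\cdots\circ H^{(1)},\qquad H^{(\ell)}\colon C_r^{(\ell-1)}\longrightarrow C_r^{(\ell)},
\]
where $H^{(\ell)}$ applies $F_r$ at basepoint $k_\ell$ and the identity elsewhere.

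Next, each $H^{(\ell)}$ has the form $F_r\otimes\id_{R^{(\ell)}}$ for a complex $R^{(\ell)}$ encoding the rest of the diagram, so that $\cone(H^{(\ell)})\simeq\cone(F_r)\otimes R^{(\ell)}$. Proposition~\ref{prop:roz_cone_limit} identifies $\cone(F_r)$ with a complex of circleless smoothings concentrated in quantum degrees at least $2c(k_\ell)(r-1)+1$. Provided the minimum quantum degree of generators in $R^{(\ell)}$ grows no faster (in the negative direction) than linearly in $r$, with slope strictly less than $2c(k_\ell)$, the generators of $\cone(H^{(\ell)})$ occupy quantum degrees tending to $+\infty$ with $r$. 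Fixing $j$, this forces $H^{i,j}(\cone(H^{(\ell)}))=0$ for all $i$, all $\ell$, and all sufficiently large $r$.

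The long exact sequence of the mapping cone then shows that each $H^{(\ell)}$ induces an isomorphism on $H^{i,j}$, and consequently so does their composition $G_r$. The hardest step will be the quantitative control of the minimum quantum degree in each tensor factor $R^{(\ell)}$, which requires understanding the Bar-Natan chain complex of the positive torus braids appearing in $R^{(\ell)}$ uniformly in $r$; I expect a direct argument applying Rozansky's own estimates inductively to the remaining basepoints to suffice.
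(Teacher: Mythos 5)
Your factorisation of $G_r$ basepoint-by-basepoint, the identification $\cone(H^{(\ell)})\simeq\cone(F_r)\otimes R^{(\ell)}$, and the use of Proposition \ref{prop:roz_cone_limit} to push the quantum degrees of the cone to $+\infty$ is exactly the strategy of the paper's proof (the paper phrases the last step via the iterated-cone identity $\cone(k\circ l)=\cone(\Sigma^{-1}\cone(k)\rightarrow\cone(l))$ rather than composing isomorphisms from the long exact sequences, but that is only bookkeeping). The one place where your write-up stops short is precisely the step you flag as hardest: a lower bound on the quantum degrees of the tensor factor $R^{(\ell)}$, which contains the twisted cables at the \emph{other} basepoints and so has a number of crossings growing with $r$. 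Without this bound the argument is incomplete, since a priori the negative quantum degrees in $R^{(\ell)}$ could cancel the linear growth coming from $\cone(F_r)$.

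The paper closes this with a short circle-counting argument rather than any further appeal to Rozansky's estimates: in the shifted complex $(hq)^{rn(n-1)/2}\langle B_{rn,n}\rangle$ the identity braid sits in bidegree $(0,0)$, and a smoothing in cohomological degree $d$ differs from the identity braid by exactly $d$ surgeries, hence contains at most $d-1$ circles; after delooping, every circleless smoothing therefore lies in non-negative quantum degree, uniformly in $r$. Together with the $r$-independent contribution of the rest of the diagram this gives a lower bound on the quantum degrees of $R^{(\ell)}$ that does not depend on $r$ at all, which is stronger (and simpler) than the ``slope strictly less than $2c(k_\ell)$'' condition you ask for. Your proposed alternative, applying Proposition \ref{prop:roz_cone_limit} inductively at the remaining basepoints, would also work: the twisted braid complex is an iterated cone on the identity braid and the cones $\cone(F_1),\ldots,\cone(F_r)$, each supported in quantum degrees $\geq 0$, so the same uniform bound follows. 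But this is the crux of the lemma and needs to be spelled out rather than left as an expectation.
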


\begin{proof}There is more than one way to see this.  For example, label the components $k_1, \ldots, k_s$
	%label the basepoints $b_1, \ldots, b_s$.  Then
	and write
	\[ e_i = \sum_{j=1}^{j=i-1} (r-1)c(k_j)(c(k_j)-1)/2 +  \sum_{j=i}^{j=s} rc(k_j)(c(k_j)-1)/2{\rm ,}\]
	and denote by $D^{k_i}_c$ the result of taking the $c$-cable of $D$ and adding $rc$ twists at the basepoints of $k_i, \ldots, k_s$ and $(r-1)c$ twists at the basepoints of $k_1, \ldots, k_{i-1}$.
	Then we can write
	\[ G_r = \overline{F^{k_s}_r} \circ \cdots \circ  \overline{F^{k_1}_r} \]
where
\[ \overline{F^{k_i}_r} : (hq)^{e_i} \langle D^{k_i}_c\rangle  \rightarrow (hq)^{e_{i+1}} \langle D^{k_{i+1}}_c \rangle \]
is induced by $F_r$ at a chosen basepoint.  The cone $\cone(\overline{F^{k_i}_r})$ is homotopy equivalent to a complex made up of the tensor product of three Bar-Natan complexes of tangles.  Namely
\begin{itemize}
	\item A complex of circleless smoothings at the chosen basepoint whose quantum degree increases linearly with $r$.
	\item At the other basepoints, the complexes $(hq)^{rn(n-1)/2}\langle B_{rn,n} \rangle$.  After circle removal, these consist of circleless smoothings each in a non-negative quantum degree.  
	This can be seen by observing that the identity braid is in cohomological and quantum degree $0$.  Smoothings in cohomological degree $d$ differ from the identity braid by exactly $d$ surgeries and so contain at most $d-1$ circles.
	\item A complex independent of $r$ arising from the Bar-Natan complex of the diagram away from the basepoints.
\end{itemize}
Finally we recall the homological algebra fact that
\[ \cone(k \circ l) = \cone(\Sigma^{-1}\cone(k) \rightarrow \cone(l))\]
for maps of complexes $k: C \rightarrow C'$, $l: C'' \rightarrow C$.  This implies that $\cone(G_r)$ can be represented by circleless smoothings such that the minimal quantum degree among them increases at least linearly with $r$.
\end{proof}

Hence we can make the following definition.
% %Note that the number of circles produced by closing up the positive twist region by a tangle arising from a smoothing of $D$...

\begin{definition}
\label{defn:roz_colored_cohom}
For fixed $j$, the $c$-colored Khovanov cohomology of the diagram $D$ framed by the component-wise writhe is defined to be the group
\[ \Khcol^{i,j}(D_c) = H^{i,j}((hq)^{\sum_k rc(k)(c(k)-1)/2} \langle D^{r}_c \rangle )\]
for sufficiently large $r$.
\end{definition}

Independence of the cohomology under Reidemeister moves II and III and under choice of basepoints follows immediately from the independence under Reidemeister moves II and III of standard Khovanov cohomology.  The fact that a suitable Euler characteristic of the cohomology agrees with the $c$-colored Jones polynomial of $D$ is due to Rozansky.

Since $H^{i,j}((hq)^{\sum_k rc(k)(c(k)-1)/2} \langle D^{r}_c \rangle)$ is simply a grading-shifted version of the usual Khovanov cohomology of $D_c^r$, the construction of Lipshitz-Sarkar gives rise to a stable homotopy type $\bar{\X}^j(D^{r}_c)$ realizing it by the (suitably shifted) singular cohomology groups.

Furthermore, observe that the map $G_r$ is induced by quotienting out a subcomplex generated by standard generators of the Khovanov complex.  This subcomplex corresponds to an upward-closed subcategory of the framed flow category associated by Lipshitz-Sarkar to $D_c^r$.  It follows that $G_r$ is induced by a map
\[ g_r : \bar{\X}^j(D^{r-1}_c) \longrightarrow \bar{\X}^j(D^{r}_c) {\rm .}\]
Since for all sufficiently large $r$, $g_r$ gives an isomorphism on cohomology, Whitehead's theorem implies that for sufficiently large $r$, $g_r$ is a homotopy equivalence.

\begin{definition}
\label{defn:roz_color_hom_type}
We can now define the colored stable homotopy type for fixed $j$ to be
\[ \Xcol^j(D_c) = \bar{\X}^j(D^{r}_c) \]
for sufficiently large $r$.  In other words, this is the homotopy colimit of the directed system of maps $g_r$.
\end{definition}

The invariance of this stable homotopy type under choice of basepoints and under Reidemeister moves II and III follows from the invariance of the Lipshitz-Sarkar homotopy type under Reidemeister moves II and III.

\begin{remark}
	Willis \cite{Willis} gave Definition \ref{defn:roz_color_hom_type} in the case that $D$ is the unknot and gave an independent argument that the limit of the system $g_r$ exists.
\end{remark}

\begin{remark}
	\label{rem:R1}
	We note that Definition \ref{defn:roz_color_hom_type} implies that the framing of the link components only affects the colored stable homotopy type up to an overall shift in bigrading, as is the case for the colored Khovanov cohomology.  This is because the blackboard-framed $c$-cable of a $1$-crossing Reidemeister $1$-tangle is equivalent to a full twist in a $c$-stranded braid by a sequence of Reidemeister moves involving $c$ Reidemeister I moves.  Reidemeister moves preserve the stable homotopy type according to Lipshitz-Sarkar, but Reidemeister I moves introduce a shift (with our grading conventions).
\end{remark}

\subsection{Cooper-Krushkal spacification}
\label{subsec:CK-spacify}
In this subsection we give the properties that one might expect of a spacification based on the Cooper-Krushkal categorification.  These properties are enough to imply that any such spacification is stably homotopy equivalent to the Rozansky spacification, as is verified in Subsection \ref{subsec:Roz_equiv_CK}.  The construction of such spacifications is, however, not straightforward, and we leave discussion of these to Section \ref{sec:CK_spacify_construction}.

Suppose that for each $n \geq 1$, $P_n$ is a complex of $(n,n)$-tangle smoothings in the sense of Bar-Natan \cite{bncob}, such that each $P_n$ is a \emph{universal projector} by Definition 3.1 of \cite{cooperkrushkal}.  Cooper-Krushkal have given a way of constructing such universal projectors.  We note that a part of their definition of $P_n$ is that the identity $n$-braid smoothing appears only once and in degree $(0,0)$, and that the quantum and cohomological degrees of every smoothing in the complex are non-negative.

Suppose that $T$ is a tangle diagram in the plane punctured by $k$ discs with $2n_i$ ordered boundary points on the $i$th disc.  Then we may define the Khovanov cochain complex (of free abelian groups) $\langle T_P \rangle$ by taking the tensor product of the Bar-Natan complex $\langle T \rangle$ and $P_{n_i}$ for $i=1, \ldots, k$ in the obvious way.

\begin{definition}
	A \emph{Cooper-Krushkal framed flow category} (C-Kffc) is a choice of finite-object framed flow category (see \cite{LipSarKhov} for definition and references) $\cC(T_P)$ refining the Khovanov cochain complex $\langle T_P \rangle$ for each such $T$.  Choosing a particular crossing of the tangle $T$ we write $T^0$ and $T^1$ for the $0$- and $1$-resolutions of that crossing.  We require that the standard generators corresponding to the subcomplex $\langle T^1_P \rangle$ (resp. the quotient complex $\langle T^0_P \rangle$) correspond to upwards closed (resp. downwards closed) framed flow subcategories of $\cC(T_P)$ such that the associated CW-complex is stably homotopy equivalent to $\vert \cC(T^1_P) \vert$ (resp. $\vert \cC(T^0_P) \vert$).
	
	%We require this choice to satisfy the condition that subcomplexes of $\langle T_P \rangle$ generated by standard generators of $\langle T_P \rangle$ correspond to upwards-closed framed flow categories of $\cC(T_P)$.  Note that this also implies that quotients by such subcomplexes then naturally correspond to downwards-closed subcategories.
	
	Furthermore, if we denote by $T^{\rm id}$ the tangle diagram produced by filling the $k$th boundary disc of $T$ with the identity $n_k$-braid, then $\langle T^{\rm id}_P \rangle$ is naturally a quotient complex of $\langle T_P \rangle$ generated by standard generators of $\langle T_P \rangle$.  We require this quotient complex to correspond to a downward closed subcategory of $\cC(T_P)$ with associated CW-complex stably homotopy equivalent to $\vert \cC(T^{\rm id}_P) \vert$.
\end{definition}

\begin{remark}
We can restrict this definition if we like to certain values of $n$.  In particular in this paper we give a genuine C-Kffc only for the color $n=2$.  For the color $n=3$ we may slightly alter the definition of a C-Kffc, to arrive at a framed flow category spacifying a cohomology theory that has its graded Euler characteristic a non-standard normalization of the $3$-colored Jones polynomial.  If we insist on the standard normalization we run into difficulties.  We discuss this in Section \ref{sec:CK_spacify_construction}.
\end{remark}

\begin{remark}
	\label{rem:handy_little_remark}
	We note that the condition that a C-Kffc assigns a \emph{finite-object} framed flow category is equivalent to the condition that the minimal quantum degree of the circleless smoothings in the $i$th cochain group of $P_n$ tends to infinity as $i \rightarrow \infty$.  Although this is true for the explicit examples of universal projectors constructed by Cooper-Krushkal, it is not required by them axiomatically.
\end{remark}

\subsection{The equivalence}
\label{subsec:Roz_equiv_CK}

We shall next see that C-Kffc's give rise to the same stable homotopy types as does $\Xcol^j$.  More precisely, let $D$ be a link diagram framed by the component-wise writhe with each component $k$ having a basepoint, and each being colored by a positive integer weight $c(k)$.  We write $D^\cab$ for the tangle formed by cutting $D$ open at each basepoint and then taking the blackboard-framed $c$-cable.  
Then we can consider the Bar-Natan cochain complex of free abelian groups formed by tensoring in $P_{c(k)}$ in the obvious way.  This cochain complex is the Cooper-Krushkal complex that categorifies the colored Jones polynomial of $D$, and it is refined by the framed flow category $\cC(D^\cab_P)$.  Writing $\cC^j(D^\cab_P)$ for the part of this framed flow category in quantum degree $j$, we have the following result.

\begin{proposition}
	\label{prop:equiv}
	With the diagram $D$ as above we have
	\[\Xcol^j(D_c) \simeq |\cC^j(D^\cab_P)| {\rm .}\]
\end{proposition}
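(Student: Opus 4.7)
Fix a quantum degree $j$. The plan is to exhibit a stable homotopy equivalence $\bar{\X}^j(D^r_c) \simeq |\cC^j(D^\cab_P)|$ for $r$ sufficiently large; by Definition~2.6 this will yield the desired conclusion.

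The approach is to bridge the two constructions via a hybrid tangle. I will let $T_r$ denote the tangle obtained from $D^\cab$ by inserting the torus braid $B_{rc(k),c(k)}$ on the $c(k)$ strands immediately before the $k$-th boundary disc. Filling each disc of $T_r$ with the identity braid recovers the link diagram $D^r_c$, while filling each disc with $P_{c(k)}$ yields the tangle-with-discs $(T_r)_P$. Iterating the identity-braid quotient axiom of a C-Kffc over all $k$ discs will identify $\bar{\X}^j(D^r_c)$ (with its built-in quantum shift) with the CW-complex of a downward closed subcategory of $\cC^{j'}((T_r)_P)$ for an appropriately shifted quantum degree $j'$, producing a natural map of spectra $\alpha_r\colon \bar{\X}^j(D^r_c) \to |\cC^{j'}((T_r)_P)|$.

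I will then identify $|\cC^{j'}((T_r)_P)|$ with $|\cC^j(D^\cab_P)|$. At the chain level this is the projector absorption $P_c \otimes \langle B_{rc,c}\rangle \simeq (hq)^{-rc(c-1)/2}P_c$, which follows from the universal properties of $P_c$ combined with Proposition~2.1. Lifting this chain equivalence to a stable equivalence of framed flow categories is the technical heart of the proof and the main obstacle: the plan is to iterate the crossing-resolution axiom of a C-Kffc over every crossing of the torus braids $B_{rc(k),c(k)}$, expressing $\cC((T_r)_P)$ as built from pieces indexed by full resolutions of the braids. One distinguished resolution (the one picking out the identity braid at each disc) will contribute a piece stably equivalent to $\cC(D^\cab_P)$ at quantum degree $j$; the remaining pieces, corresponding to smoothings containing turnbacks or other non-identity tangles, will contribute only in quantum degrees growing linearly in $r$ by the interaction with $P_{c(k)}$ (as controlled by Remark~2.11), hence will be invisible at quantum degree $j$ for $r$ sufficiently large.

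Composing these identifications will produce a map $\bar{\X}^j(D^r_c) \to |\cC^j(D^\cab_P)|$. On cohomology it induces the canonical identification of Rozansky's computation of $\Khcol^{*,j}(D_c)$ (via Lemma~2.4) with the Cooper-Krushkal computation (via $\langle D^\cab_P\rangle$), hence is an isomorphism for $r$ sufficiently large. Whitehead's theorem for spectra then upgrades it to the desired stable homotopy equivalence.
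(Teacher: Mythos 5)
Your overall architecture coincides with the paper's: both arguments work with the hybrid object in which each basepoint carries the projector $P_{c(k)}$ together with $rc(k)$ twists (your $(T_r)_P$ is the paper's $C^{i,j}_{P,r}(D)$, refined by $\cA^j_{P,r}(D)$), compare its realization in one direction with $|\cC^j(D^{\cab}_P)|$ and in the other with $\bar{\X}^j(D^{r}_c)$, and finish with Whitehead. However, your treatment of the two comparisons has a genuine gap, and your only quantitative claim is attached to the wrong step with the wrong justification. In the identification of $|\cC^{j'}((T_r)_P)|$ with $|\cC^j(D^{\cab}_P)|$, the pieces indexed by non-identity resolutions of the braids $B_{rc(k),c(k)}$ do \emph{not} live only in quantum degrees growing with $r$: at the chain level they have generators in quantum degree $j$ for every $r$ (resolving a single crossing the other way changes the quantum shift by a bounded amount). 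What makes them invisible is that every non-identity Temperley--Lieb resolution has a turnback adjacent to $P_{c(k)}$, so by turnback-triviality each such piece is acyclic in \emph{every} quantum degree; Remark \ref{rem:handy_little_remark} is irrelevant here, since it controls quantum degree as a function of the internal cohomological degree of $P_n$, not as a function of $r$.

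The real quantitative work is needed in the step you gloss over: showing that $\alpha_r$ (equivalently your composite map) induces an isomorphism on cohomology in quantum degree $j$ for $r$ large. This requires proving that the complementary upward-closed subcategory --- whose cochain complex is built from the positive-cohomological-degree pieces of the projectors composed with the twist regions, the paper's $\Gamma^r_1 \rightarrow \Gamma^r_2 \rightarrow \cdots$ --- has vanishing cohomology in quantum degree $j$ once $r$ is large. Asserting that the composite ``induces the canonical identification of Rozansky's computation with the Cooper--Krushkal computation, hence is an isomorphism'' does not establish this: abstract agreement of the two cohomology theories says nothing about whether \emph{this particular map} is an isomorphism, and that agreement is itself proved by exactly the estimate you have omitted. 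The missing input has two parts: for large internal degree $s$ of the projector there are no generators in quantum degree $j$ at all (this is where Remark \ref{rem:handy_little_remark} actually enters), while for each fixed $s$ the piece $\Gamma^r_s$ Gauss-eliminates to a complex whose minimal quantum degree is bounded below by a function $b(r)\rightarrow\infty$ independent of $s$; the latter rests on Rozansky's formula (4.9) (a turnback pushed through a full twist is supported in quantum degree growing linearly in $r$), in the spirit of Proposition \ref{prop:roz_cone_limit} but not a formal consequence of it. Without this, the final appeal to Whitehead's theorem has nothing to stand on.
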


\begin{figure}
	\resizebox{5cm}{2.5cm}{\input{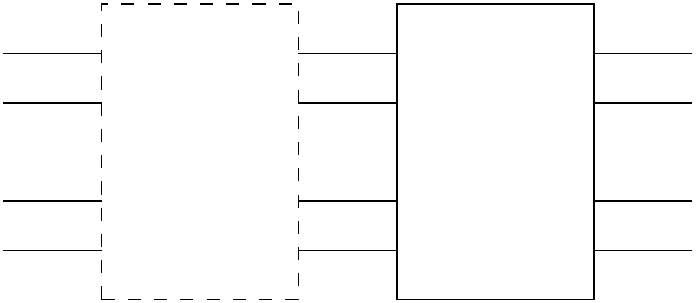_t}}
	\caption{We describe how to form the complex $C^{i,j}_{P,r}(D)$ from a based $c$-colored diagram $D$.  We take the blackboard-framed $c$-cable of $D$ and at the basepoint of a component $k$ of $D$ we tensor in $P_{k(c)}$ and add $c(k)r$ twists as shown in the diagram.  We then take the corresponding cochain complex and shift by $hq^{\sum_k r c(k)(c(k)-1)/2}$.}
	\label{fig:equiv_of_spacify}
\end{figure}
\begin{proof}
We fix j.  We write $C^{i,j}_{P,r}(D)$ to be the cochain complex of free abelian groups formed by following the procedure as outlined in Figure \ref{fig:equiv_of_spacify}.  By the definition of a C-Kffc, there is a framed flow category $\cA^j_{P,r}(D)$ that refines $C^{i,j}_{P,r}(D)$.

Consider the quotient complex $C_1$ of $C^{i,j}_{P,r}(D)$ consisting all generators corresponding to taking the $0$-resolution at each of the crossings of the twist regions at the basepoints.  This corresponds to a downward-closed subcategory $\cA_1$ of $\cA^j_{P,r}(D)$.  
We observe firstly that $|\cA_1|$ is stably homotopy equivalent to $|\cA^j_{P,0}(D)|$ which is exactly $|\cC^j(D^\cab_P)|$, and secondly that the corresponding upward-closed subcategory has trivial cohomology by the turnback-triviality condition on the projectors $P_{c(k)}$.  Hence we have that
\[|\cA^j_{P,r}(D)| \simeq |\cA_1| \simeq |\cC^j(D^\cab_P)| {\rm .}\]

On the other hand, for any value of $r$, the complex $C^{i,j}_{P,r}(D)$ can be written as the total complex
\[  (hq)^{\sum_k rc(k)(c(k)-1)/2} \langle D^{r}_c\rangle  \rightarrow \Gamma^r_1 \rightarrow \cdots \rightarrow \Gamma^r_s \rightarrow \cdots {\rm ,} \]
where each $\Gamma^r_s$ carries an internal differential arising from all crossings of the twisted $c$-cable of $D$, while the part of the differential from $\Gamma^r_s$ to $\Gamma^r_{s+1}$ is induced by the differentials of the $P_{c(k)}$.  
This is because the identity braid smoothing is the only smoothing appearing in cohomological degree zero of each complex $P_{c(k)}$.

Now the minimal quantum degree of a generator in $\bigoplus_r\Gamma^r_s$ tends to $+\infty$ as $s$ tends to $+\infty$ (see Remark \ref{rem:handy_little_remark}).  On the other hand, each $\Gamma^r_s$ is chain-homotopy equivalent by Gauss-elimination to a complex in which the minimal quantum degree is bounded below by $b(r)$, a function independent of $s$ and tending to $+\infty$ as $r$ tends to $+\infty$.  This follows from formula ($4.9$) of \cite{rozansky} and the observation that the cohomological Reidemeister I and II relations can be proved by Gauss-elimination.

Hence the lowest quantum degree of the support of the cohomology of the subcomplex
\[ \Gamma^r_1 \rightarrow \cdots \rightarrow \Gamma^r_s \rightarrow \cdots \]
tends to $+\infty$ as $r$ tends to $+\infty$.  The quotient complex $(hq)^{\sum_k rc(k)(c(k)-1)/2} \langle D^{r}_c\rangle$ corresponds to a downward-closed subcategory of $\cA^j_{P,r}(D)$ with associated stable homotopy type $\bar{\X}^j(D^{r}_c)$.  So for large enough $r$ we have
\[ \Xcol^j(D_c) \simeq \bar{\X}^j(D^{r}_c) \simeq |\cA^j_{P,r}(D)| \simeq |\cA_1| \simeq |\cC^j(D^\cab_P)| {\rm .} \]
\end{proof}

\begin{remark}
	We have worked here with colored links, but all of what we have done applies, \emph{mutatis mutandis}, to more general (in other words, not just diagrams obtained by cabling) closed diagrams containing Jones-Wenzl projectors.
\end{remark}

\section{Lifting the Cooper-Krushkal projectors}
\label{sec:CK_spacify_construction}
In this section we give a C-Kffc associated to link diagrams colored with colors drawn from the palette $\{1, 2\}$.  It would seem \emph{a priori} very likely that the methods used in this construction should extend to the color $3$, since for this color we have (due to Cooper-Krushkal \cite{cooperkrushkal}) an explicit and fairly simple cohomological projector.  However, it turns out that there is an unexpected non-trivial obstruction to this extension.  
The obstruction can be obviated by renormalizing the 3-colored Jones invariant of the $0$-framed unknot to be
\[ (q^{-2} + 1 + q^2)(1 - q^2 + q^4 - q^6 + \cdots) \,\,\,\, {\rm rather} \,\, {\rm than} \,\,\,\, q^{-2} + 1 + q^2 {\rm .}\]
%We make a conjecture on $\Xcol(U_3)$ - the Khovanov stable homotopy type of the 3-colored unknot - at the end Section \ref{sec:examples}.
We briefly discuss the obstruction and renormalization at the end of Section \ref{sec:examples}, but we do not give in this paper the full construction of the renormalized spacification.

\subsection{A 2-colored Cooper-Krushkal projector}
\label{subsec:CK2}
In \cite{JLS}, two of the authors together with Dan Jones considered the 2-stranded braid of $k$ crossings, each of the same sign.  The Bar-Natan complex of this tangle has a particularly simple form: it is homotopy equivalent to a complex which has one circleless smoothing in each cohomological degree from $-k/2$ to $k/2$ (with the grading conventions used in this paper).  
Indeed, in Figure \ref{fig:ck2},
%\footnote{[P] I found Figure \ref{fig:ck2} a bit hard to read, even though I know what it should be. Is it possible to get the arrows, rather than the arrow decorations, vertically centred? It would also be helpful to spread the `chain groups' and the arrows apart from one-another so it's more obvious where the separations are.}
we give the Cooper-Krushkal projector for the color $2$; the Bar-Natan complex for the positively twisted $k$-crossing $2$-braid is, up to an overall shift, the quotient complex of this projector consisting all tangles of cohomological degree less than $k+1$.

Decomposing a closed link diagram $D$ into a tensor product of such tangles one can consider the tensor product of their simplified chain homotopy class
%\footnote{[P] I think there's an inconsistency with hyphenation of chain-homotopy throughout.}
representives.  This gives a cochain complex $\langle D \rangle^{\rm simp}$ (depending on the decomposition of $D$) of free abelian groups, and $\langle D \rangle^{\rm simp}$ is refined by a framed flow category given in \cite{JLS}.  
The associated stable homotopy type was shown to be independent of the choice of decomposition, and it was observed that the decomposition in which each tangle has a single crossing returns the Lipshitz-Sarkar framed flow category.

\begin{figure}
	\resizebox{10cm}{1cm}{\input{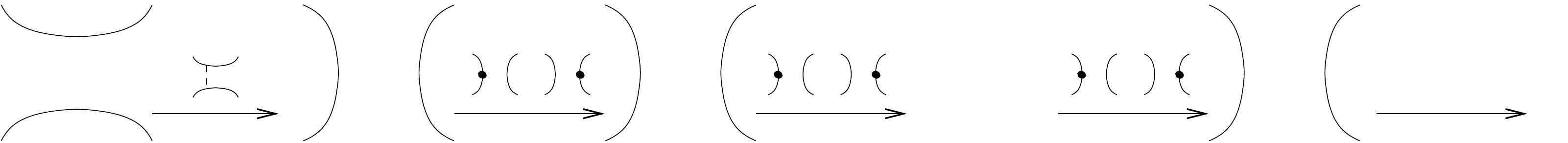_t}}
	\caption{We show here the Cooper-Krushkal projector.  We suppress the degree-shifts for ease of visualization.  The degree shifts can be determined by noting that the identity-braid or horizontal smoothing on the far left is in cohomological degree $0$ and quantum degree $0$, and all differentials raise the cohomological degree by $1$ and preserve the quantum degree.}
	\label{fig:ck2}
\end{figure}

Taking a suitably normalized version of this construction for $k=\infty$ gives a construction of a C-Kffc.  In particular, this construction enables us to make non-trivial calculations of the colored stable homotopy types of the $(2,1)$-colored Hopf link as well as of the $2$-colored trefoil.
% % % %

Suppose that $T$ is a tangle diagram in the plane punctured by $k$ discs each with $4$ ordered boundary points.  Let the closed diagram $T^r$ be given by filling in each disc with $2r$ positive twists.

We consider a particular decomposition of $T^r$ into a tensor product of tangles - specifically, we take one tangle (of $2r$ crossings) at each filled disc, one tangle for every other crossing of $T^r$, and finally the rest of the diagram which is crossingless.

Such a decomposition into tangles is exactly the input into the construction of the paper \cite{JLS}.  So, incorporating now an overall shift and fixing a quantum degree $j$, there is a framed flow category $\cA^j(T^r)$ refining the quantum degree $j$ part of the simplified cochain complex $(hq)^{kr}\langle T^r \rangle^{\rm simp}$.

Finally we note that for fixed $j$ and large enough $r$, the quantum degree $j$ part of $(hq)^{kr}\langle T^r \rangle^{\rm simp}$ agrees with the quantum degree $j$ part of the Cooper-Krushkal complex $\langle T_P \rangle$.  So, taking $r$ to be large, the framed flow category $\cA^j(T^r)$ provides our candidate for a C-Kffc.  The remaining properties required of a C-Kffc are now straightforward to verify.

\section{Examples}
\label{sec:examples}

\subsection{The $2$-colored unknot}

Consider a diagram of the blackboard framed $2$-cable of the $0$-crossing unknot containing a Cooper-Krushkal projector $P_2$. The generators in the resulting cochain complex come from smoothings with two circles in homological degree $0$, and one circle in homological degree bigger than $0$, compare Figure \ref{fig:ck2}. The minimal quantum degree in which we get a generator is therefore $q=-2$ with one generator in homological degree $0$. 
For $q=0$ we get two generators in homological degree $0$ and one in homological degree $1$.
For $q=2$ there is one generator in homological degrees $0$, $1$ and $2$ each.

For $q=2j$ with $j\geq 2$ we get two generators, one in homological degree $j-1$ and one in degree $j$. The coboundary map alternates between multiplication by  $0$ and $2$. The cohomology is therefore easily calculated, and determines the stable homotopy types because of thinness. We thus get
\begin{align*}
 \Xcol^{-2}(U_2) &= S^0 \hspace{1cm}
 \Xcol^{0}(U_2) = S^0 \hspace{1cm}
 \Xcol^{2}(U_2) = S^2 \\
 \Xcol^{4j}(U_2) &= M(\Z/2,2j)\mbox{ for }j\geq 1 \\
 \Xcol^{4j+2}(U_2) &= S^{2j+1}\vee S^{2j+2} \mbox{ for }j \geq 1.
\end{align*}
Note that the notation $M(G,n)$ stands for a Moore space, a space whose only non-trivial integral homology group is $G$ in degree $n$.

\subsection{The $2$-colored trefoil}
\begin{figure}
	\resizebox{5cm}{5cm}{\input{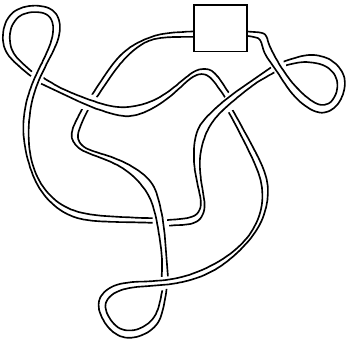_t}}
	\caption{We show the $0$-framed $2$-cable of the right-handed trefoil with a Cooper-Krushkal projector placed on it.}
	\label{fig:framed3_1}
\end{figure}

In Figure \ref{fig:framed3_1} we give a diagram of a $2$-cable of the right-handed trefoil $T$ containing a Cooper-Krushkal projector $P_2$.  The extra loops ensure that we get the $0$-framed $2$-cable, and we denote it by $T^0_2$. For each quantum degree $j$ this diagram gives rise to a framed flow category $\cA$ as described in Subsection \ref{subsec:CK2}.

For calculational purposes, we want to remove the three double loops. Performing two Reidemeister I moves and one Reidemeister III move turns each double loop into a $(-2)$-tangle, which can be absorbed by the projector $P_2$. However, because of the Reidemeister I moves, we get a shift in homological and quantum degrees. More precisely, we get $\langle D_2^r\rangle = h^3q^9\langle D'_2\,\!^{r-3}\rangle$, where $D'$ is the standard $3$-crossing diagram of the right-handed trefoil. Denoting the 2-colored right-hand trefoil with framing $3$ by $T^3_2$, we get $\Khcol^{i,j}(T^0_2)=\Khcol^{i+6,j+12}(T^3_2)$.

Taking these shifts into account and working with the diagram for $T^3_2$, we see that the least quantum degree in $\cA=\cA_0$ which admits an object is given by $q=2$ with homological degree $h=0$, coming from a smoothened diagram with $4$ circles. This is indeed the only object in this quantum degree.

The projector $P_2$ gives rise to upward closed subcategories $\cA_k$ for $k\geq 0$ generated by objects that arise from a tangle in $P_2$ of cohomological degree at least $k$. The highest quantum degree of an object in $\cA_0-\cA_1$ is $q=24$ coming from $6$ circles in the smoothened diagram. It follows that for quantum degree $q\geq 26$ the relevant flow category $\cA^q$ is a full subcategory of $\cA_1$.

The quotient category $\cA_j/\cA_{j+1}$ for $j\geq 1$ is, up to degree shifts, the Lipshitz-Sarkar flow category of a diagram of the unknot with $12$ crossings. Furthermore, this diagram can be transformed into the standard unknot diagram by performing six Reidemeister II moves. The category $\cA_j/\cA_{j+1}$ for $j\geq 1$ is therefore stably equivalent to a flow category containing two objects of homological degree $j+6$, one of quantum degree $2j+12$, the other of quantum degree $2j+10$.

Also notice that the associated cochain complexes to the flow categories $\cA^q$ and $\cA^{q+4}$ for $q\geq 26$ only differ in a cohomological shift by $2$.  If the tail turns out to be cohomologically thin (as it does), it follows that the stable homotopy types for $q$ up to $28$ determine all the stable homotopy types.  The stable homotopy types for $q$ up to $28$ may be determined using the diagram $D'_2\,\!^r$ for large $r$.  It turns out that $r=8$ is sufficient, and the following calculations have been done using the programme \verb+KnotJob+ available at 
\href{http://www.maths.dur.ac.uk/~dma0ds/knotjob.html}{http://www.maths.dur.ac.uk/$\sim$dma0ds/knotjob.html}.

We can identify all stable homotopy types from cohomology and Steenrod square calculations using the classification result of Baues-Hennes \cite{bauhen} with the exception of $q=10$, where $\X^{10}_{{\rm col}(2)}(T)$ is either $S^{3}\vee S^{4} \vee S^6$ or $X(\varepsilon,3)\vee S^{4}$. Recall that $X(\varepsilon,n)$ is the space obtained by attaching an $(n+3)$-cell to $S^n$ using the nontrivial element of $\pi_2^{\mathbf{st}}\cong \Z/2$. Excluding this, we get
\[
 \begin{array}{ll}
  \Xcol^{2}(T^0_2) = S^{0} & \Xcol^{4}(T^0_2) = S^{0} \\
  \Xcol^{6}(T^0_2) = S^{2} & \Xcol^{8}(T^0_2) = X(_2\eta,2) \\
  \Xcol^{12}(T^0_2) = X(\eta2,5)\vee S^6 & \Xcol^{14}(T^0_2) = X(\eta2,5)\vee S^7 \vee S^8 \vee S^8 \\
  \Xcol^{16}(T^0_2) = S^7 \vee M(\Z/4,8) \vee M(\Z/2,8) & \Xcol^{18}(T^0_2) = S^9 \! \vee \! S^9 \! \vee \! M(\Z/2,9)\! \vee \! S^{10} \\
  \Xcol^{20}(T^0_2) = M(\Z/2,9)\vee M(\Z/2,10)\vee S^{11} & \Xcol^{22}(T^0_2) = S^{11}\vee M(\Z/2,11) \vee S^{12} \\
  \Xcol^{24}(T^0_2) = S^{12} \vee M(\Z/2,12)
 \end{array}
\]
The tail is given by
\begin{align*}
 \Xcol^{4j+2}(T^0_2) &= S^{2j+1}\vee S^{2j+2} \mbox{ for }j \geq 6 \\
 \Xcol^{4j}(T^0_2) &= M(\Z/2,2j)\mbox{ for }j\geq 7.
\end{align*}
Notice that for $j\geq 26$ we have $\Xcol^{j}(T^0_2)=\Xcol^j(U_2)$.

The notation $X(\eta2,n)$ is taken from \cite{bauhen}, and stands for an elementary Chang complex. It is an appropriate suspension of $\RP^4/\RP^1$ such that the first non-trivial homology group is in degree $n$. Similarly, $X(_2\eta,m)$ is a suspension of $\RP^5/\RP^2$ such that the first non-trivial homology group is in degree $m$. Both spaces have non-trivial $\Sq^2$ and are therefore not wedges of Moore spaces.

% For odd values of $j$ the framed flow category contains no objects.  For high enough values of $j$, the cochain complex for quantum degree $j$ is isomorphic, after a cohomological degree shift, with the cochain complex for $j+4$ - this follows from the observation that the projector $P_2$ has this periodicity.  Since it turns out that the resulting $4$-periodic tail is cohomologically thin, we only need to determine the homotopy type for high enough values of $j$ before we know the full homotopy type.
% 
% 
% 
% \begin{remark}
% 	It seems likely that for any $2$-colored knot, the tail of the cohomology is shifted-isomorphic with the tail of the cohomology of the $2$-colored unknot.  This should follow from fact that replacing the projector $P_2$ in a $2$-cable of a knot with the smoothing that appears in $P_2$ in all but cohomological degree $0$, results in a diagram of the unknot.  We do not prove this here.
% \end{remark}
% 
% The diagram given in Figure \ref{fig:framed3_1} can be somewhat simplified, so that it firstly has fewer crossings and secondly decomposes into $2$-stranded braid tangles, some of more than $1$ crossing.  This enables a computer program to determine the homotopy type.

\subsection{The $(2,1)$-colored Hopf link}

\begin{figure}
	\resizebox{5cm}{5cm}{\input{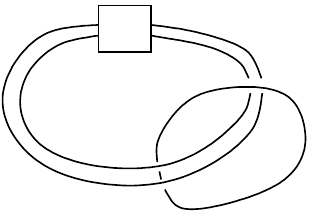_t}}
	\caption{This is the $0$-framed $(2,1)$-cable of the Hopf link in which the $2$-cabled component receives a Cooper-Krushkal projector.}
	\label{fig:hopf21}
\end{figure}

We denote the $(2,1)$-colored $0$-framed Hopf link by $H_{2,1}$.  In Figure \ref{fig:hopf21} we give a diagram of the Hopf link in which one of the components has been replaced by a $0$-framed $2$-cable containing a Cooper-Krushkal projector $P_2$.  For each quantum degree $j$ this diagram gives rise to a framed flow category as described in Subsection \ref{subsec:CK2}.  The associated stable homotopy type is $\Xcol(H_{2,1})$.

Note that the diagram consists of the tensor product of three parts: the projector $P_2$, and then two tangles each of which is a $2$-crossing $2$-braid.  
As before, we can filter the flow category via the projector, leading to categories $\cA_j$ for $j\geq 0$.

%By replacing $P_2$ with a $2$-braid consisting of $2k$ positive crossings and applying the construction outlined in Subsection \ref{subsec:CK2}, we obtain a framed category whose associated stable homotopy type is exactly the spacification of the $(2,1)$-colored Khovanov cohomology of the Hopf link in quantum degree $j$.

For actual calculations, we replace the projector with a $(2r)$-tangle, so the resulting diagram is that of the $P(-2,2,2r)$ pretzel link. For a given quantum degree we can then use the method of \cite{JLS} to get a flow category built from three tangles. The lowest quantum degree for which we can get an object is $q=-5$, for which there is exactly one object of homological degree $-2$. 

For $q\geq 7$, all objects are contained in $\cA_1$, and the categories $\cA^{2j-1}$ and $\cA^{2j+3}$ for $j\geq 4$ have the following similarity.
If $\alpha$ is an object of $\cA^{2j-1}$ which also sits in $\cA_k$ for $k\geq 1$, there is a corresponding object $\bar{\alpha}$ in $\cA^{2j+3}$ also in $\cA_{k+2}$ with $|\bar{\alpha}|=|\alpha|+2$. It is clear from the framing formulas in \cite{JLS} that for the moduli spaces $\mathcal{M}(\alpha,\beta)\cong \mathcal{M}(\bar{\alpha},\bar{\beta})$ as framed manifolds, provided these are at most $1$-dimensional. 

Therefore the colored Khovanov cohomology of the tail is periodic, and since we only get non-trivial cohomology groups in three adjacent degrees, we also get periodicity of the stable homotopy type in the tail. This uses that the $1$-dimensional moduli spaces agree with framing for $\cA^{2j-1}$ and $\cA^{2j+3}$.
Calculation of Khovanov cohomology and the second Steenrod Square shows that
\begin{align*}
 \Xcol^{-5}(H_{2,1}) &= S^{-2} & \Xcol^{-3}(H_{2,1}) &= S^{-2} \\
 \Xcol^{-1}(H_{2,1}) &= S^0 & \Xcol^{1}(H_{2,1}) &= X(_2\eta,0) \\
 \Xcol^{3}(H_{2,1}) &= S^1 \vee S^2 \vee S^2 & \Xcol^{5}(H_{2,1}) &= X(_2\eta,2) 
\end{align*}
The tail is given by
\begin{align*}
 \Xcol^{4j-1}(H_{2,1}) &=X(\eta 2,2j-1)\vee S^{2j} \mbox{ for }j\geq 2 \\
 \Xcol^{4j+1}(H_{2,1}) &= X(_2\eta,2j)\vee S^{2j+1}  \mbox{ for }j\geq 2.
\end{align*}
The $(2,1)$-colored unlink $U_{2,1}$ is the disjoint union of the $1$-colored $0$-framed unknot $U_1$ and the $2$-colored $0$-framed unknot $U_2$. The stable homotopy type can therefore be derived using \cite[Thm.1]{LawLipSar}. More precisely, we get
\[
 \Xcol^j(U_{2,1}) = (\X^1(U) \wedge \Xcol^{j-1}(U_2)) \vee (\X^{-1}(U) \wedge \Xcol^{j+1}(U_2)).
\]
Since both $\X^1(U)=S^0=\X^{-1}(U)$, we have that $\Xcol^j(U_{2,1})$ is a wedge of Moore spaces for all $j$. In the tail we have
\begin{align*}
 \Xcol^{4j-1}(U_{2,1}) &= S^{2j-1}\vee S^{2j}\vee M(\Z/2,2j) \mbox{ for }j\geq 2, \\
 \Xcol^{4j+1}(U_{2,1}) &= M(\Z/2,2j) \vee S^{2j+1} \vee S^{2j+2} \mbox{ for }j\geq 2.
\end{align*}
In particular, we have
\[
 \Khcol^{i,j}(U_{2,1}) = \Khcol^{i,j}(H_{2,1})
\]
for all $j\geq 7$ (a result that for high enough $j$ is not unexpected, and that can be derived in ways other than brute calculation), but
\[
 \Xcol^j(U_{2,1}) \not\simeq \Xcol^j(H_{2,1}) {\rm .}
\]

\subsection{A conjecture on the $3$-colored unknot}

The stable homotopy type of the $0$-framed $3$-colored unknot $\X^j_{\rm col}(U_3)$ was partially computed by Willis \cite{Willis}, who showed that it was not a wedge of Moore spaces and so, in some sense, more interesting than just the colored Khovanov cohomology.

The $3$-colored Khovanov cohomology can easily be calculated from \cite[\S 4.4]{cooperkrushkal}. We summarize this in Table \ref{tab:3-unknot}.
\begin{table}
\setlength\extrarowheight{3pt}
\begin{tabular}{|l|c|c|c|c|}
 \hline
 & $i=1$ & $i=2$ & $i=3$ & $i=4$ \\
 \hline
 $\Khcol^{i-4,-3}(U_3)$ & & & & $\Z$ \\
 \hline
 $\Khcol^{i-4,-1}(U_3)$ & & & & $\Z$ \\
 \hline
 $\Khcol^{i,1}(U_3)$ & & $\Z$ & &  \\
 \hline
 $\Khcol^{i,3}(U_3)$ & & & $\Z/2$ & $\Z$ \\
 \hline
 $\Khcol^{i+4j,6j+5}(U_3),\, j\geq 0$ & & & $\Z$ & $\Z$ \\
 \hline
 $\Khcol^{i+4j,6j+1}(U_3),\, j \geq 1$ & $\Z$ & $\Z$ & & \\
 \hline
 $\Khcol^{i+4j,6j+3}(U_3),\, j \geq 1$ & $\Z$ & & $\Z/2$ & $\Z$ \\
 \hline
\end{tabular}
\vspace{0.3cm}
\label{tab:3-unknot}
\caption{The $3$-colored Khovanov cohomology of the unknot.}
\end{table}

We observe that the tail is $3$-periodic in quantum degrees $q=2j+1$ starting from $j\geq 2$ with a homological shift by $4$. Also, by simply looking at these groups we see that except for quantum degrees $q=6j+3$ with $j\geq 0$ the stable homotopy types are wedges of spheres. In quantum degree $q=3$ we have the non-trivial Steenrod Square coming from the torus knot $T_{4,3}$ first observed in \cite{LipSarSq}, and which stably survives by \cite{Willis}.

The quantum degree $q=9$ can be realized by the torus knot $T_{7,3}$, and computer calculations show a non-trivial $\Sq^2$ in degree $5$, with $\Sq^2$ trivial in degree $6$. The triviality in degree $6$ indicates that the tail of the stable homotopy types is not $3$-periodic, as the difference in $3$-colored Khovanov cohomology in quantum degrees $q=3$ and $q=9$ comes from an extra generator in homological degree $0$ killing the cocycle in degree $1$, which survives in degree $5$ for $q=9$.

Computer calculations on $T_{8,3}$ show a trivial $\Sq^2$ in degree $9$, although this is not yet in the stable range for $q=15$. Using a suitable diagram with a low number of tangles we have made computer calculations for $T_{13,3}$ which give evidence for the conjecture below.

\begin{conjecture}
 For $j \geq 1$ we have
\begin{align*}
 \Xcol^{12j-3}(U_3) &= X(\eta2,8j-3) \vee S^{8j}, \\
 \Xcol^{12j+3}(U_3) &= S^{8j+1} \vee X(_2\eta,8j+2).
\end{align*}

\end{conjecture}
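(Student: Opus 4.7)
My approach is to combine Rozansky's spacification (Definition \ref{defn:roz_color_hom_type}) with a $6$-periodicity of the relevant framed flow categories, modeled on the $2$-periodicity argument used above for the $(2,1)$-colored Hopf link. For fixed $q$, the stable homotopy type $\Xcol^q(U_3)$ is realized as $\bar{\X}^q(U_3^r)$ for sufficiently large $r$, where $U_3^r$ is the blackboard-framed $3$-cable of the unknot with $3r$ twists, i.e.\ the torus link $T(3r,3)$ with an overall bigrading shift. Using the simplified-tangle techniques of \cite{JLS} on a suitable decomposition of $T(3r,3)$, one obtains a framed flow category $\cA$ refining the relevant cochain complex, filtered by upward-closed subcategories coming from the Rozansky stabilization.

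For the base case $j=1$, namely $q=9$ and $q=15$, I would verify the equivalences $\Xcol^9(U_3) \simeq X(\eta 2,5)\vee S^{8}$ and $\Xcol^{15}(U_3) \simeq S^{9}\vee X(_2\eta,10)$ by direct computer calculation: the cohomology is determined by Table~\ref{tab:3-unknot}, and the nontrivial $\Sq^2$ structure has already been seen in the calculation on $T_{7,3}$ for $q=9$ and is consistent with the $T_{13,3}$ calculation for $q=15$ (provided the twist number is large enough to place $q=15$ in the stable range). Since the relevant cohomology is thin and the only non-elementary Postnikov data is a single $\Sq^2$, the Baues--Hennes classification \cite{bauhen} then pins down the homotopy types uniquely.

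The inductive step requires showing $\Xcol^{q+12}(U_3)\simeq \Sigma^{8}\Xcol^{q}(U_3)$ for $q=6m+3$ in the stable range. I would construct, in the style of the Hopf-link argument, a bijection $\alpha\leftrightarrow\bar\alpha$ between the generators of $\cA^{q}$ that lie in filtration level $k$ and those of $\cA^{q+12}$ that lie in filtration level $k+4$, with $|\bar\alpha|=|\alpha|+8$. Using the framing formulas of \cite{JLS} one would then verify that $\mathcal{M}(\alpha,\beta)\cong\mathcal{M}(\bar\alpha,\bar\beta)$ as framed manifolds in all dimensions at most $1$; combined with the $\Sq^{1}, \Sq^{2}$ information this is enough to determine the stable homotopy type in the thin range under consideration.

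The hard part is establishing this refined $6$-periodicity at the level of framings, rather than the mere $3$-periodicity visible in cohomology. Indeed, one sees explicitly that the $3$-periodic shift $(q,h)\mapsto(q+6,h+4)$ \emph{fails} to induce a stable equivalence: $\Sq^{2}$ is nontrivial in degree $5$ for both $q=3$ and $q=9$, so the expected $4$-fold homological shift in the Steenrod square is absent. The cohomological obstruction is the extra generator in homological degree $0$ of the $q=9$ complex that kills a cocycle one degree up, and this generator contributes a $\mathbb{Z}/2$-valued twist to the framed moduli spaces. The content of the conjecture is that this twist squares to the trivial framing, so that iterating the $3$-periodic shift twice yields a genuine framed equivalence giving the $(q,h)\mapsto(q+12,h+8)$ period. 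Making the twist explicit, proving it has order dividing $2$ in the relevant framed cobordism group, and checking that the resulting $6$-periodicity is compatible with the Rozansky stabilization map $g_r$ is the main technical obstacle; once it is in place, the two base cases propagate to all $j\geq 1$ to give the conjectured formulas.
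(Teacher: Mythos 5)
This statement is a \emph{conjecture} in the paper: the authors supply no proof, only computational evidence (a nontrivial $\Sq^2$ in degree $5$ and trivial $\Sq^2$ in degree $6$ for $T_{7,3}$, a $T_{8,3}$ computation explicitly noted to be outside the stable range for $q=15$, and calculations on $T_{13,3}$ described only as ``evidence''), and they record a concrete obstruction --- ladybug-matching problems --- to the natural attempt to lift $P_3$ to a C-Kffc. Your proposal is likewise not a proof: its key step, that the framings of the relevant moduli spaces are genuinely $6$-periodic (i.e.\ that the $\Z/2$ ``twist'' you introduce has order two and is compatible with the stabilization maps $g_r$), is exactly the content of the conjecture, and you explicitly defer it as ``the main technical obstacle.'' Nothing in the paper, nor in \cite{JLS} (whose simplified tangle complexes are developed here only for $2$-stranded twist regions, so the ``framing formulas'' you invoke do not directly apply to a $3$-cable decomposition of $T(3r,3)$), supplies such a framed periodicity; indeed the paper notes that the naive lift of $P_3$ has $3$-periodic framings on its $1$-dimensional moduli spaces and still cannot be completed to a C-Kffc, which is precisely why the authors stop at a conjecture.

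Two further steps would fail as written even if that periodicity were granted. First, the base case $q=15$ is not available: the paper's $T_{8,3}$ computation is not in the stable range there and $T_{13,3}$ is offered only as evidence, so you would need an explicit stable-range bound (via Proposition \ref{prop:roz_cone_limit}) and a verified computation of the full homotopy type at $q=15$, not just of $\Sq^2$. Second, ``thin cohomology plus $\Sq^1,\Sq^2$ plus Baues--Hennes'' does not pin down the type in this situation: for $q=6j+3$ the groups occupy homological degrees $4j+1$, $4j+3$, $4j+4$, so cells lie three dimensions apart and an $\eta^2$-type attachment is invisible to Steenrod squares --- exactly the ambiguity the paper itself cannot resolve for the $2$-colored trefoil at $q=10$, where $S^3\vee S^4\vee S^6$ and $X(\varepsilon,3)\vee S^4$ remain undistinguished. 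For the same reason, matching framed moduli spaces only up to dimension $1$ (as in the Hopf-link tail, where the cohomology is supported in adjacent degrees) is not enough to transport the stable type in your inductive step; you would need control of $2$-dimensional moduli spaces or an honest equivalence of the relevant framed flow subcategories.
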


Note that these two spaces are not stably homotopy equivalent, although they are Spanier-Whitehead dual when appropriately shifted. Following consideration of the Cooper-Krushkal projector $P_3$ explicitly described in \cite{cooperkrushkal} this conjecture is somewhat surprising. From $P_3$ the $3$-periodicity follows immediately, so one may expect the same periodicity in the tail of the stable homotopy type.

Indeed, if one attempts a spacification based on lifting the Cooper-Krushkal projector $P_3$ to a framed flow category, one finds that the natural first attempt gives rise to $1$-dimensional moduli spaces the framings of which also follow $3$-periodicity.  However, if one then pushes a little further to determine if one can genuinely lift $P_3$ to a C-Kffc, one runs into `ladybug matching' type problems which cannot all be solved simultaneously in a natural way, at least to the authors' eyes.

On the other hand, suppose that $D$ is a tangle diagram in a disc with $6$ ordered boundary points.  This gives a cochain complex of free abelian groups $\langle D_P \rangle$.  Now consider the `reduced' subcomplex $\langle D_P \rangle^{\rm red}$ of $\langle D_P \rangle$ obtained by restricting to half the generators of $\langle D_P \rangle$.  Specifically, restrict to only those generators arising from a decoration by $v_-$ of a chosen boundary point of $D$.  In such a situation one can lift the cochain complex $\langle D_P \rangle^{\rm red}$ to a framed flow category refining it.  The ladybug matching problems no longer occur since we have thrown out enough objects of the flow category to kill them.

Unfortunately, this subcomplex is not really a very natural object to consider.  The graded Euler characteristic is a renormalized version of the $3$-colored Reshetikhin-Turaev invariant as discussed at the start of Section \ref{sec:CK_spacify_construction}, but it is hard to motivate why one should consider this renormalization.  Therefore we do not pursue this further here.

\bibliographystyle{amsplain}
\bibliography{References.bib}
\end{document}